\definecolor{darkblue}{rgb}{0,0,.5}
\newcommand{\RR}{\mathbb R } 
\newcommand{\XX}{\mathcal{X}} 
\newcommand{\MINLP}{MINLP\xspace}
\newcommand{\MILP}{MILP\xspace}
\newcommand{\xlp}{\bar{x}} 
\DeclareMathOperator{\conv}{conv} 
\DeclareMathOperator{\argmin}{arg\,min} 
\DeclareMathOperator{\midp}{mid} 
\DeclareMathOperator{\dom}{dom} 
  \newcommand{\T}{\mathsf{T}}
  \renewcommand{\T}{\mathsf{T}}
\theoremstyle{plain}
\newtheorem{theorem}{Theorem}
\newtheorem{proposition}[theorem]{Proposition}
\newtheorem{lemma}[theorem]{Lemma}
\theoremstyle{definition}
\newtheorem{definition}[theorem]{Definition}
\newtheorem{remark}[theorem]{Remark}
\newtheorem{XxmpX}[]{Example}
\newenvironment{example}{\pushQED{\qed}\begin{XxmpX}}{\popQED\end{XxmpX}}
\newcommand{\Title}{\bf Intersection cuts for factorable MINLP}
\author{Felipe Serrano\thanks{Zuse Institute Berlin, Takustr.~7, 14195~Berlin,
Germany, \texttt{serrano@zib.de}}}
\title{\Title}
\begin{document}

\zibtitlepage
\maketitle


\begin{abstract}
    Given a factorable function $f$, we propose a procedure that constructs a
    concave underestimor of $f$ that is tight at a given point.
    These underestimators can be used to generate intersection cuts.
    A peculiarity of these underestimators is that they do not rely on a bounded
    domain.
    We propose a strengthening procedure for the intersection cuts that
    exploits the bounds of the domain.
    Finally, we propose an extension of monoidal strengthening to take advantage
    of the integrality of the non-basic variables.

    {\bf Keywords:} Mixed-integer nonlinear programming, intersection cuts,
    monoidal strengthening.
\end{abstract}


\section{Introduction}
\label{sec:intro}

In this work we propose a procedure for generating intersection cuts for mixed
integer nonlinear programs (\MINLP).
We consider \MINLP of the following form
\begin{equation}
    \label{eq:MINLP}
    \begin{aligned}
        \max  \;& c^\T x \\
        \;\text{s.t.}\;& g_j(x) \leq 0, j \in J \\
                       & A x = b \\
                       & x_i \in \mathbb{Z}, i \in I \\
                       & x \geq 0,
    \end{aligned}
\end{equation}
where $J = \{1, \ldots, l\}$ denotes the indices of the nonlinear constraints,
$g_j \colon \mathbb{R}^n \to \mathbb{R}$ are assumed to be continuous and
factorable (see \Cref{def:factorable}), $A \in \mathbb{R}^{m \times n}$, $c \in
\mathbb{R}^n$, $b \in \mathbb{R}^m$, and $I \subseteq \{1, \ldots, n\}$ are the
indices of the integer variables.
We denote the set of feasible solutions by $S$ and a generic relaxation of $S$
by $R$, that is, $S \subseteq R$.
When $R$ is a translated simplicial cone and $C$ contains its apex and no point
of $S$ in its interior, valid inequalities for $\conv(R \setminus C)$ are called
intersection cuts~\cite{Balas1971}.
See the excellent survey~\cite{ConfortiCornuejolsZambelli2011a} for recent
developments and details on intersection cuts for mixed integer linear programs
(\MILP).

Many applications can be modeled as \MINLP~\cite{BoukouvalaMisenerFloudas2016}.
The current state of the art for solving \MINLP to global optimality is via
linear programming (LP), convex nonlinear programming and (\MILP) relaxations of
$S$, together with spatial branch and
bound~\cite{BelottiLeeLibertiMargotWaechter2009,KilincSahinidis2017,LinSchrage2009,MisenerFloudas2014,TawarmalaniSahinidis2005,VigerskeGleixner2017}.
Roughly speaking, the LP-based spatial branch and bound algorithm works as
follows.
The initial polyhedral relaxation is solved and yields $\xlp$.
If the solution $\xlp$ is feasible for~\eqref{eq:MINLP}, we obtain an optimal
solution.
If not, we try to separate the solution from the feasible region.
This is usually done by considering each violated constraint separately.
Let $g(x) \leq 0$ be a violated constraint of~\eqref{eq:MINLP}.
If $g(\xlp) > 0$ and $g$ is convex, then $g(\xlp) + v^\T(x - \xlp) \leq 0$,
where $v \in \partial g(\xlp)$ and $\partial g(\xlp)$ is the subdifferential of
$g$ at $\xlp$, is a valid cut.
If $g_j$ is non-convex, then a convex underestimator $g_{vex}$, that is, a
convex function such that $g_{vex}(x) \leq g(x)$ over the feasible region, is
constructed and if $g_{vex} (\xlp) > 0$ the previous cut is constructed for
$g_{vex}$.
If the point cannot be separated, then we branch, that is, we select a variable
$x_k$ in a violated constraint and split the problem into two problems, one with
$x_k \leq \xlp_k$ and the other one with $x_k \geq \xlp_k$.

Applying the previous procedure to the \MILP case, that is~\eqref{eq:MINLP} with
$J = \emptyset$, reveals a problem with this approach.
In this case, the polyhedral relaxation is just the linear programming (LP)
relaxation.
Assuming that $\xlp$ is not feasible for the \MILP, then there is an $i \in I$
such that $x_i \notin \mathbb{Z}$.
Let us treat the constraint $x_i \in \mathbb{Z}$ as a nonlinear non-convex
constraint represented by some function as $g(x_i) \leq 0$.
Then, $g(\xlp_i) > 0$.
However, a convex underestimator $\bar g$ of $g$ must satisfy that $g_{vex}(z)
\leq 0$ for every $z \in \mathbb{R}$, since $g_{vex}(z) \leq g(z) \leq 0$ for
every $z \in \mathbb{Z}$ and $g_{vex}(z)$ is convex.
Since separation is not possible, we need to branch.

However, for the current state-of-the-art algorithms for \MILP, cutting planes
are a fundamental component~\cite{AchterbergWunderling2013}.
A classical technique for building cutting planes in \MILP is based on
exploiting information from the simplex
tableau~\cite{ConfortiCornuejolsZambelli2011a}.
When solving the LP relaxation, we obtain $x_B = \xlp_B + R x_N$,
where $B$ and $N$ are the indices of the basic and non-basic variables,
respectively.
Since $\xlp$ is infeasible for the \MILP, there must be some $k \in B \cap I$
such that $\xlp_k \notin \mathbb{Z}$.
Now, even though $\xlp$ cannot be separated from the violated constraint $x_k
\in \mathbb{Z}$, the equivalent constraint, $\xlp_k + \sum_{j \in N} r_{kj} x_j
\in \mathbb{Z}$ can be used to separate $\xlp$.

In the \MINLP case, this framework generates equivalent non-linear constraints
with some appealing properties.
The change of variables $x_k = \xlp_k + \sum_{j \in N} r_{kj} x_j$ for the basic
variables present in a violated nonlinear constraint $g(x) \leq 0$, produces the
non-linear constraint $h(x_N) \leq 0$ for which $h(0) > 0$ and $x_N \geq 0$.
Assuming that the convex envelope of $h$ exists in $x_N \geq 0$,
then we can always construct a valid inequality.
Indeed, by~\cite[Corollary 3]{TawarmalaniSahinidis2002}, the convex envelope of
$h$ is tight at 0.
Since an $\epsilon$-subgradient\footnote{
    An $\epsilon$-subgradient of a convex function $f$ at $y \in \dom f$ is $v$
    such that $f(x) \geq f(y) - \epsilon + v^\T(x - y)$ for all $x \in \dom f$
}
always exists for any $\epsilon > 0$ and $x \in
\dom h$~\cite{BrondstedRockafellar1965}, an
$\frac{h(0)}{2}$-subgradient, for instance, at 0 will separate it.

Even when there is no convex underestimator for $h$, a valid cutting plane does
exist.
Continuity of $h$ implies that $X = \{ x_N \geq 0 : h(x_N) \leq 0\}$ is closed
and~\cite[Lemma 2.1]{ConfortiCornuejolsDaniilidisLemarechalMalick2015} ensures
that $0 \notin \overline{conv} X$, thus, a valid inequality exists.
We introduce a technique to construct such a valid inequality.
The idea is to build a \emph{concave underestimator} of $h$, ${h}_{ave}$, such
that ${h}_{ave}(0) = h(0) > 0$.
Then, $C = \{x_N : {h}_{ave}(x_N) \geq 0\}$ is an \emph{$S$-free set}, that is, a
convex set that does not contain any feasible point in its interior, and as such
can be used to build an intersection cut
(IC)~\cite{Tuy1964,Balas1971,Glover1973}.

\paragraph{First contribution}%
In \Cref{sec:under}, we present a procedure to build concave underestimators for factorable
functions that are tight at a given point.
The procedure is similar to McCormick's method for constructing convex
underestimators, and generalizes Proposition 3.2 and improves Proposition 3.3
of~\cite{Khamisov1999}.
These underestimators can be used to build intersection cuts.
We note that IC from a concave underestimator can generate cuts that cannot be
generated by using the convex envelope.
This should not be surprising, given that intersection cuts work at the
feasible region level, while convex underestimators depend on the graph
of the function.
A simple example is $\{x \in [0,2]: -x^2 + 1 \leq 0\}$.
When separating 0, the intersection cut gives $x \geq 1$, while using the
convex envelope over $[0,2]$ yields $x \geq 1/2$.\\
%

There are many differences between concave underestimators and convex ones.
Maybe the most interesting one is that concave underestimators do not need
bounded domains to exist.
As an extreme example, $-x^2$ is a concave underestimator of itself, but a
convex underestimator only exists if the domain of $x$ is bounded.
Even though this might be regarded as an advantage, it is also a problem.
If concave underestimators are independent of the domain, then we cannot improve
them when the domain shrinks.

\paragraph{Second contribution}%
In \Cref{sec:strength}, we propose a strengthening procedure that uses the bounds of the variables to
enlarge the $S$-free set.
Our procedure improves on the one used by Tuy~\cite{Tuy1964}.\\

Other techniques for strengthening IC have been proposed, such as, exploiting
the integrality of the non-basic
variables~\cite{BalasJeroslow1980,ConfortiCornuejolsZambelli2011,DeyWolsey2010},
improving the relaxation $R$~\cite{BalasMargot2011,Porembski1999,Porembski2001}
and computing the convex hull of $R \setminus
C$~\cite{BasuCornuejolsZambelli2011,ConfortiCornuejolsDaniilidisLemarechalMalick2015,Glover1974,SenSherali1986,SenSherali1987}.

\paragraph{Third contribution}%
By interpreting IC as disjunctive cuts~\cite{Balas1979}, we extend monoidal
strengthening to our setting~\cite{BalasJeroslow1980} in \Cref{sec:monoid}.
Although its applicability seems to be limited, we think it is of independent
interest, specially for MILP.\\



\section{Related work}
\label{sec:literature}
There have been many efforts on generalizing cutting planes from \MILP to
\MINLP, we refer the reader to~\cite{ModaresiKilincVielma2015} and the
references therein.
In~\cite{ModaresiKilincVielma2015}, the authors study how to compute
$\conv(R \setminus C)$ where $R$ is not polyhedral, but $C$ is a $k$-branch
split.
In practice, such sets $C$ usually come from the integrality of the variables.
Works that build sets $C$ which do not come from integrality considerations
include~\cite{Belotti2011,BienstockChenMunoz2016,SaxenaBonamiLee2010a,SaxenaBonamiLee2010}.
We refer to~\cite{BonamiLinderothLodi2011} and the references therein for more
details.
We would like to point out that the disjunctions built
in~\cite{Belotti2011,SaxenaBonamiLee2010a,SaxenaBonamiLee2010}
can be interpreted as piecewise linear concave underestimators.
However, our approach is not suitable for disjunctive cuts built through cut
generating LPs~\cite{BalasCeriaCornuejols1993}, since we generate infinite
disjunctions, see \Cref{sec:monoid}, so we rely on the classical concept of
intersection cuts where $R$ is a translated simplicial cone.

Khamisov~\cite{Khamisov1999} studies functions $f : \mathbb{R}^n \to
\mathbb{R}$, representable as $f(x) = \max_{y \in R} \varphi(x,y)$ where
$\varphi$ is continuous and concave on $x$.
These functions allow for a concave underestimator at \emph{every} point.
He shows that this class of functions is very general, in particular, the class
of functions representable as difference of convex functions is a strict subset
of this class.
He then shows how to build concave underestimators of some functions.
The technique in~\cite{Khamisov1999} for building an underestimator for the
composition of two functions is a special case of $\Cref{thm:composition}$
below, and the one for building an underestimator for the product requires a
compact domain.
We simplify the construction for the product and no longer need a compact
domain.

Although not directly related to this work, other papers that use underestimators
other than convex
are~\cite{BuchheimDAmbrosio2016,BuchheimTraversi2013,Hasan2018}.


\section{Concave underestimators}
\label{sec:under}
In his seminal paper~\cite{McCormick1976}, McCormick proposed a method to build
convex underestimators of \emph{factorable} functions.
\begin{definition}
    \label{def:factorable}
    Given a set of univariate functions $\mathcal{L}$, e.g.,$\mathcal{L} =
    \{\cos, \cdot^n, \exp, \log, \text{...}\}$, the set of \emph{factorable
    functions} $\mathcal{F}$ is the smallest set that contains $\mathcal{L}$,
    the constant functions, and is closed under addition, product and
    composition.
\end{definition}
As an example, $e^{-(\cos(x^2) + xy/4)^2}$ is a factorable function for
$\mathcal{L} = \{\cos, \exp\}$.

Given the inductive definition of factorable functions, to show a property about
them one just needs to show that said property holds for all the functions in
$\mathcal{L}$, constant functions, and that it is preserved by the product,
addition and composition.
For instance, McCormick~\cite{McCormick1976} proves, constructively, that every
factorable function admits a convex underestimator and a concave overestimator,
by showing how to construct estimators for the sum, product and composition of
two functions for which estimators are known.

An estimator for the sum of two functions is the sum of the
estimators.
For the product, McCormick uses the well-known McCormick inequalities.
Less known is the way McCormick handles the composition $f(g(x))$.
Let $f_{vex}$ be a convex underestimator of $f$ and $z_{\min} = \argmin
f_{vex}(z)$.
Let $g_{vex}$ be a convex underestimator of $g$ and $g^{ave}$ a concave
overestimator.
McCormick shows\footnote{He actually leaves it as an exercise for the
reader.} that $f_{vex} (\midp\{ g_{vex}(x), g^{ave}(x), z_{\min} \})$ is a convex
underestimator of $f(g(x))$, where $\midp\{x,y,z\}$ is the median between $x,y$
and $z$.
It is well known that the optimum of a convex function over a closed interval is
given by such a formula, thus
\[
    f_{vex} (\midp \{ g_{vex}(x), g^{ave}(x), z_{\min} \}) =
    \min \{ f_{vex}(z) : z \in [g_{vex}(x), g^{ave}(x)] \},
\]
see also~\cite{TsoukalasMitsos2014}.

\begin{definition}
    Let $\XX \subseteq \mathbb{R}^n$ be convex, and $f : \XX \to \RR$ be a
    function.
    We say that $f_{ave} : \XX \to \RR$ is a \emph{concave underestimator of $f$
    at} $\xlp \in \XX$ if $f_{ave}(x)$ is concave, $f_{ave}(x) \leq f(x)$ for
    every $x \in \XX$ and $f_{ave}(\xlp) = f(\xlp)$.
    Similarly we define a \emph{convex overestimator of $f$ at} $\xlp \in \XX$.
\end{definition}
\begin{remark}
    For simplicity, we will consider only the case where $\XX = \mathbb{R}^n$.
    This restriction leaves out some common functions like $\log$.
    One possibility to include these function is to let the range of the
    function to be $\mathbb{R}\cup\{\pm \infty\}$.
    Then, $\log(x) = -\infty$ for $x \in \mathbb{R}_-$.
    Note that other functions like $\sqrt{x}$ can be handled by replacing them
    by a concave underestimator defined on all $\mathbb{R}$.
\end{remark}
We now show that every factorable function admits a concave underestimator at a
given point.
Since the case for the addition is easy, we just need to specify how to build
concave underestimators and convex overestimators for
\begin{itemize}
    \item the product of two functions for which estimators are known,
    \item the composition $f(g(x))$ where estimators of $f$ and $g$ are known
        and $f$ is univariate.
\end{itemize}

\begin{theorem}
    \label{thm:composition}
    Let $f : \mathbb{R} \to \mathbb{R}$ and $g : \mathbb{R}^n \to \mathbb{R}$.
    Let $g_{ave}, f_{ave}$ be, respectively, a concave underestimator
    of $g$ at $\xlp$ and of $f$ at $g(\xlp)$.
    Further, let $g^{vex}$ a convex overestimator of $g$ at $\xlp$.
    Then,
    \[
        h(x) := \min \{ f_{ave}(g_{ave}(x)), f_{ave}(g^{vex}(x)) \},
    \]
    is a concave underestimator of $f(g(x))$ at $\xlp$.
\end{theorem}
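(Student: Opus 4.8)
The plan is to verify the three defining properties of a concave underestimator at $\xlp$ one at a time: (i) $h$ is concave, (ii) $h(x) \le f(g(x))$ for all $x$, and (iii) $h(\xlp) = f(g(\xlp))$. The single elementary fact that drives everything is that a (finite) concave univariate function attains its minimum over a closed interval at an endpoint, i.e.\ for $a \le b$
\[
    \min_{z \in [a,b]} f_{ave}(z) = \min\{f_{ave}(a), f_{ave}(b)\},
\]
which is immediate from writing $z = \mu a + (1-\mu)b$ and using concavity of $f_{ave}$. The other recurring ingredient is the ``sandwich'' $g_{ave}(x) \le g(x) \le g^{vex}(x)$, valid at every $x$ because $g_{ave}$ underestimates and $g^{vex}$ overestimates $g$; in particular $[g_{ave}(x), g^{vex}(x)]$ is a nonempty interval containing $g(x)$, and by the endpoint fact $h(x) = \min_{z \in [g_{ave}(x), g^{vex}(x)]} f_{ave}(z)$.

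Properties (ii) and (iii) are then quick. For (ii), since $g(x) \in [g_{ave}(x), g^{vex}(x)]$, the endpoint formula gives $h(x) \le f_{ave}(g(x)) \le f(g(x))$, the last inequality because $f_{ave}$ underestimates $f$. For (iii), $g_{ave}$ and $g^{vex}$ are tight at $\xlp$, so $g_{ave}(\xlp) = g(\xlp) = g^{vex}(\xlp)$ and hence $h(\xlp) = f_{ave}(g(\xlp)) = f(g(\xlp))$, using tightness of $f_{ave}$ at $g(\xlp)$.

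The heart of the proof is concavity. Fix $x_1, x_2$, $\lambda \in [0,1]$, and set $x_\lambda = \lambda x_1 + (1-\lambda)x_2$, $\ell = \lambda g_{ave}(x_1) + (1-\lambda)g_{ave}(x_2)$, and $r = \lambda g^{vex}(x_1) + (1-\lambda)g^{vex}(x_2)$; note $\ell \le r$ by applying the sandwich at $x_1$ and $x_2$. I will show that \emph{both} $g_{ave}(x_\lambda)$ and $g^{vex}(x_\lambda)$ lie in $[\ell, r]$: concavity of $g_{ave}$ gives $g_{ave}(x_\lambda) \ge \ell$, convexity of $g^{vex}$ gives $g^{vex}(x_\lambda) \le r$, and the sandwich at $x_\lambda$, namely $g_{ave}(x_\lambda) \le g^{vex}(x_\lambda)$, then forces both values into $[\ell, r]$. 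Consequently $h(x_\lambda) = \min\{f_{ave}(g_{ave}(x_\lambda)), f_{ave}(g^{vex}(x_\lambda))\} \ge \min_{z \in [\ell,r]} f_{ave}(z) = \min\{f_{ave}(\ell), f_{ave}(r)\}$ by the endpoint fact. Finally I bound each endpoint value using concavity of $f_{ave}$ and the definition of $h$: $f_{ave}(\ell) \ge \lambda f_{ave}(g_{ave}(x_1)) + (1-\lambda)f_{ave}(g_{ave}(x_2)) \ge \lambda h(x_1) + (1-\lambda)h(x_2)$, since $f_{ave}(g_{ave}(x_i)) \ge h(x_i)$, and likewise $f_{ave}(r) \ge \lambda h(x_1) + (1-\lambda)h(x_2)$ with $g^{vex}$ in place of $g_{ave}$. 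Combining the displays gives $h(x_\lambda) \ge \lambda h(x_1) + (1-\lambda)h(x_2)$.

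I expect the main obstacle to be resisting the natural but wrong approach of proving concavity branch by branch: $f_{ave} \circ g_{ave}$ need not be concave in isolation because $f_{ave}$ is not assumed monotone, so one cannot split the minimum before establishing concavity. The key idea, and essentially the only nontrivial point, is to keep the two branches of the minimum together and use the sandwich $g_{ave} \le g \le g^{vex}$ to trap both $g_{ave}(x_\lambda)$ and $g^{vex}(x_\lambda)$ inside the single interval $[\ell, r] = \lambda[g_{ave}(x_1),g^{vex}(x_1)] + (1-\lambda)[g_{ave}(x_2),g^{vex}(x_2)]$, after which the endpoint property of concave functions closes the argument.
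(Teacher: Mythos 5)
Your proof is correct and follows essentially the same route as the paper: rewrite $h(x)$ as $\min\{f_{ave}(z) : z \in [g_{ave}(x), g^{vex}(x)]\}$, show the interval at $x_\lambda$ is contained in the convex combination of the intervals at $x_1, x_2$ (your endpoint trapping is exactly the paper's inclusion $I \subseteq J$), and finish with the fact that a concave function minimized over an interval attains its minimum at an endpoint. The only cosmetic difference is the last step, where you bound $f_{ave}(g_i(x_j)) \ge h(x_j)$ directly rather than splitting the minimum of a sum into a sum of minima as the paper does; both are immediate.
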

\begin{proof}
    Clearly, $h(\xlp) = f(g(\xlp))$.

    To establish $h(x) \leq f(g(x))$, notice that
    \begin{equation}
        \label{eq:niceform}
        h(x) = \min \{ f_{ave}(z) : g_{ave}(x) \leq z \leq g^{vex}(x) \}.
    \end{equation}
    Since $z = g(x)$ is a feasible solution and $f_{ave}$ is an underestimator
    of $f$, we obtain that $h(x) \leq f(g(x))$.

    Now, let us prove that $h$ is concave.
    To this end, we again use the representation~\eqref{eq:niceform}.
    To simplify notation, we write $g_1, g_2$ for $g_{ave}, g^{vex}$,
    respectively.\\
    We prove concavity by definition, that is,
    \[
        h(\lambda x_1 + (1 - \lambda) x_2) \geq \lambda h(x_1) + (1 - \lambda)
        h(x_2), \text{ for } \lambda \in [0,1].
    \]
    Let
    \begin{align*}
        I &= [g_1(\lambda x_1 + (1 - \lambda) x_2), g_2(\lambda x_1 + (1 -
        \lambda) x_2)]\\
        J &= [\lambda g_1(x_1) + (1 - \lambda) g_1(x_2), \lambda g_2(x_1) + (1 -
        \lambda) g_2(x_2)].
    \end{align*}
    By the concavity of $g_1$ and convexity of $g_2$ we have $I \subseteq J$.
    Therefore,
    \begin{align*}
    h(\lambda x_1 + (1 - \lambda) x_2) = \min \{ f_{ave}(z) : z \in I \}
                                       \geq \min \{ f_{ave}(z) : z \in J \}.
    \end{align*}
    Since $f_{ave}$ is concave, the minimum is achieved at the boundary,
    \[
        \min \{ f_{ave}(z) : z \in J \} =
        \min_{i \in \{1, 2\}} f_{ave}(\lambda g_i(x_1) + (1 - \lambda)
        g_i(x_2)).
    \]
    Furthermore, $f_{ave}(\lambda g_i(x_1) + (1 - \lambda) g_i(x_2)) \geq
    \lambda f_{ave}(g_i(x_1)) + (1 - \lambda) f_{ave}(g_i(x_2))$
    which implies that
    \begin{align*}
        h(\lambda x_1 + (1 - \lambda) x_2) &\geq
        \min_{i \in \{1, 2\}}
        \lambda f_{ave}(g_i(x_1)) + (1 - \lambda) f_{ave}(g_i(x_2)) \\
        &\geq \min_{i \in \{1, 2\}}
        \lambda f_{ave}(g_i(x_1)) +
        \min_{i \in \{1, 2\}} (1 - \lambda) f_{ave}(g_i(x_2)) \\
        &= \lambda h(x_1) + (1 - \lambda) h(x_2),
    \end{align*}
    as we wanted to show.
\end{proof}

\begin{remark}
    The generalization of \Cref{thm:composition} to the case where $f$ is
    multivariate in the spirit of~\cite{TsoukalasMitsos2014} is straightforward.
\end{remark}

The computation of a concave underestimator and convex overestimator of the
product of two functions reduces to the computation of estimators for the square
of a function through the polarization identity
\[
    4 f(x) g(x) = (f(x) + g(x))^2 - (f(x) - g(x))^2.
\]
Let $h : \mathbb{R}^n \to \mathbb{R}$ for which we know estimators
$h_{vex} \leq h \leq h^{ave}$ at $\xlp$.
From \Cref{thm:composition}, a convex overestimator of $h^2$ at $\xlp$ is given
by $\max\{{h_{vex}}^2, {h^{ave}}^2\}$.
On the other hand, a concave underestimator of $h^2$ at $\xlp$ can be constructed
from the underestimator $h^2(x) \geq h^2(\xlp) + 2 h(\xlp)(h(x) - h(\xlp))$.
From here we obtain
\begin{equation}
    \label{eq:product}
    \begin{cases}
        2 h(\xlp)h^{ave}(x) - h^2(\xlp), &\text{ if } h(\xlp) \leq 0\\
        2 h(\xlp)h_{vex}(x) - h^2(\xlp), &\text{ if } h(\xlp) > 0.
    \end{cases}
\end{equation}

\begin{example}
    Let us compute a concave underestimator of $f(x) = e^{-(\cos(x^2) + x/4)^2}$
    at $0$.
    Estimators of $x^2$ are given by $0 \leq x^2 \leq x^2$.
    For $\cos(x)$, estimators are $\cos(x) - x^2/2 \leq \cos(x) \leq 1$.
    Then, a concave underestimator of $\cos(x^2)$ is, according to
    \Cref{thm:composition}, $\min\{\cos(0) - 0^2/2, \cos(x^2) - x^4/2\} =
    \cos(x^2) - x^4/2$.
    A convex overestimator is 1.
    Hence, $\cos(x^2) - x^4/2 + x/4 \leq \cos(x^2) + x/4 \leq 1 + x/4$.

    Given that $-x^2$ is concave, a concave underestimator of $-(\cos(x^2) +
    x/4)^2$ is $\min\{-(\cos(x^2) - x^4/2 + x/4)^2, -(1 + x/4)^2\}$.
    To compute a convex overestimator of $-(\cos(x^2) + x/4)^2$, we compute
    a concave underestimator of $(\cos(x^2) + x/4)^2$.
    Since, $\cos(x^2) + x/4$ at 0 is $1$, \eqref{eq:product} yields
    $2 (\cos(x^2) - x^4/2 + x/4) - 1$.

    Finally, a concave underestimator of $e^x$ at $x = -1$ is just its
    linearization, $e^{-1} + e^{-1}(x + 1)$ and so
    $e^{-1} + e^{-1}(1 + \min\{-(\cos(x^2) - x^4/2 + x/4)^2, -(1 + x/4)^2\})$
    is a concave underestimator of $f(x)$.
    The intermediate estimators as well as the final concave underestimator
    are illustrated in \Cref{fig:ex1}.
    \begin{figure}
        \includegraphics[width=\textwidth]{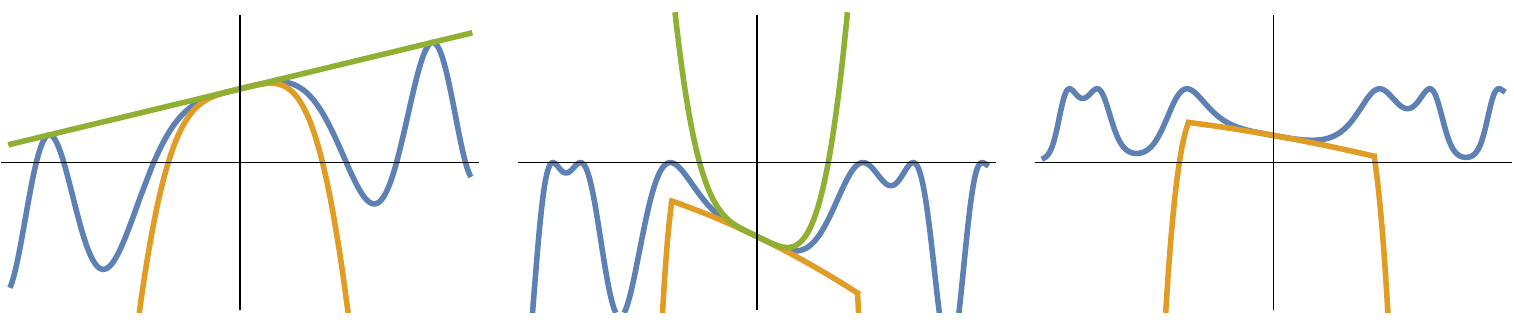}
        \caption{Concave underestimator (orange) and convex overestimator
            (green) of $\cos(x^2) + x/4$ (left), $-(\cos(x^2) +
            x/4)^2$ (middle) and $f(x)$ (right) at $x = 0$.
        } \label{fig:ex1}
    \end{figure}
\end{example}

For ease of exposition, in the rest of the paper we assume that the concave
underestimator is differentiable.
All results can be extended to the case where the functions are only sub- or
super-differentiable.


\section{Enlarging the $S$-free sets by using bound information}
\label{sec:strength}
In \Cref{sec:under}, we showed how to build concave underestimators which give
us $S$-free sets.
Note that the construction does not make use of the bounds of the domain.
We can exploit the bounds of the domain by the observation that the concave
underestimator only needs to underestimate within the feasible region.
However, to preserve the convexity of the $S$-free set, we must ensure that the
underestimator is still concave.

Let $h(x) \leq 0$ be a constraint of \eqref{eq:MINLP}, assume $x \in [l,u]$
and let $h_{ave}$ be a concave underestimator of $h$.
Throughout this section, $S = \{ x \in [l,u] : h(x) \leq 0\}$.
In order to construct a concave function $\hat h$ such that $\{ x : \hat
h(x) \geq 0 \}$ contains $\{ x : h_{ave}(x) \geq 0 \}$, consider the
following function
\begin{equation}
    \label{eq:strong}
    \hat h(x) = \min \{ h_{ave}(z) +
    \nabla h_{ave}(z)^\T (x - z) :\ z \in [l,u],\ h_{ave}(z) \geq 0 \}.
\end{equation}
A similar function was already considered by Tuy~\cite{Tuy1964}.
The only difference is that Tuy's strengthening does not use the restriction
$h_{ave}(z) \geq 0$, see \Cref{fig:ex2}.

\begin{proposition}
    \label{prop:strong}
    Let $h_{ave}$ be a concave underestimator of $h$ at $\xlp \in [l, u]$, such
    that $h(\xlp) > 0$.
    Define $\hat h$ as in \eqref{eq:strong}.
    Then, the set $C = \{x : \hat h(x) \geq 0\}$ is a convex $S$-free set
    and $C \supseteq \{ x : h_{ave}(x) \geq 0 \}$.
\end{proposition}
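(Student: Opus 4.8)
The plan is to verify the three assertions in turn: that $\hat h$ is concave, that $C \supseteq \{x : h_{ave}(x) \geq 0\}$, and that $C$ is $S$-free. Concavity is the easiest: $\hat h$ is defined in \eqref{eq:strong} as a pointwise minimum over $z$ of the affine functions $x \mapsto h_{ave}(z) + \nabla h_{ave}(z)^\T(x-z)$, and an infimum of affine functions is concave (the feasible set of $z$'s is fixed and nonempty, since $\xlp$ is feasible for it because $h_{ave}(\xlp) = h(\xlp) > 0$). So $C = \{x : \hat h(x) \geq 0\}$ is the intersection of the halfspaces $\{x : h_{ave}(z) + \nabla h_{ave}(z)^\T(x-z) \geq 0\}$ over admissible $z$, hence convex.

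Next, the containment $C \supseteq \{x : h_{ave}(x) \geq 0\}$. Because $h_{ave}$ is concave and differentiable, the gradient inequality gives $h_{ave}(x) \leq h_{ave}(z) + \nabla h_{ave}(z)^\T(x-z)$ for every $z$. Hence if $h_{ave}(x) \geq 0$ then every affine function appearing in the minimum defining $\hat h(x)$ is $\geq h_{ave}(x) \geq 0$, so $\hat h(x) \geq 0$, i.e. $x \in C$. In particular $\xlp \in C$, since $h_{ave}(\xlp) = h(\xlp) > 0 > 0$ — wait, $> 0$ suffices — so the apex of the cone lies in $C$, which is what is needed for an intersection cut.

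The substantive part is $S$-freeness: I must show $\hat h(x) < 0$ — or at least $\hat h(x) \leq 0$ with care about the interior — for every $x \in S$, i.e. every $x \in [l,u]$ with $h(x) \leq 0$. Fix such an $x$. The idea is that the segment from $\xlp$ to $x$ stays in $[l,u]$ (a box, hence convex) and $h_{ave}$ restricted to it is a concave function that is strictly positive at $\xlp$ (value $h(\xlp)>0$) and $\leq h(x) \leq 0$ at $x$; by the intermediate value theorem there is a point $z^*$ on this segment with $h_{ave}(z^*) = 0$, and $z^* \in [l,u]$, so $z^*$ is admissible in \eqref{eq:strong}. Then $\hat h(x) \leq h_{ave}(z^*) + \nabla h_{ave}(z^*)^\T(x - z^*) = \nabla h_{ave}(z^*)^\T(x - z^*)$. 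I then argue this quantity is $\leq 0$: since $h_{ave}$ is concave along the segment, positive at $\xlp$, zero at $z^*$, and $x$ is on the far side of $z^*$ from $\xlp$, the directional derivative of $h_{ave}$ at $z^*$ in the direction $x - z^*$ is nonpositive (the concave function is nonincreasing past $z^*$ along this line), so $\nabla h_{ave}(z^*)^\T(x-z^*) \leq 0$, giving $\hat h(x) \leq 0$.

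The main obstacle is the distinction between $\hat h(x) \leq 0$ and $\hat h(x) < 0$: $S$-freeness requires $S$ to avoid the \emph{interior} of $C$, i.e. the points of $S$ should satisfy $\hat h(x) \leq 0$ (boundary allowed), so in fact $\hat h(x)\le 0$ on $S$ is exactly the right statement and the argument above suffices; I should state the $S$-free property in the ``$\le 0$'' form and confirm it matches the definition used for intersection cuts (apex strictly inside, no feasible point strictly inside). A secondary technical point is the edge case where $h_{ave}(x) \geq 0$ already for some $x \in S$ (then $x \in C$, and indeed $x$ is then on the boundary $\hat h(x) = 0$, consistent because $h(x)\le 0 \le h_{ave}(x) \le h(x)$ forces $h_{ave}(x)=0$, so no feasible point lands strictly inside), and handling differentiability — which the paper has just assumed for $h_{ave}$, and which I invoke only through the gradient inequality and the one-sided derivative along a segment, so no extra hypotheses are needed. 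I would organize the write-up as: (1) concavity and convexity of $C$; (2) the gradient-inequality containment and $\xlp \in C$; (3) the IVT/directional-derivative argument for $S$-freeness.
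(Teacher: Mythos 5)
Your proposal is correct and follows essentially the same route as the paper: concavity of $\hat h$ as a pointwise minimum of affine functions, the containment via the gradient inequality (the paper phrases it as $h_{ave}(x)=\min_z h_{ave}(z)+\nabla h_{ave}(z)^\T(x-z)$ over an unrestricted domain), and $S$-freeness via the intermediate value theorem along the segment from $\xlp$ to a feasible point, using concavity to show the linearization at the crossing point is nonpositive at the far end. Your extra remarks (nonemptiness of the feasible set in \eqref{eq:strong}, the $h_{ave}(x)\geq 0$ edge case, and the interior/boundary distinction in the definition of $S$-free) are all sound and only make explicit what the paper leaves implicit.
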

\begin{proof}
    The function $\hat h$ is concave since it is the minimum of linear
    functions.
    This establishes the convexity of $C$.

    To show that $C \supseteq \{ x : h_{ave}(x) \geq 0 \}$, notice that
    $h_{ave}(x) = \min_{z} h_{ave}(z) + \nabla h_{ave}(z)^\T (x - z)$.
    The inclusion follows from observing that the objective function in the
    definition of $\hat h(x)$ is the same as above, but over a smaller domain.

    To show that it is $S$-free, we will show that for every $x \in [l,u]$ such
    that $h(x) \leq 0$, $\hat h(x) \leq 0$.

    Let $x_0 \in [l,u]$ such that $h(x_0) \leq 0$.
    Since $h_{ave}$ is a concave underestimator at $\xlp$, $h_{ave}(\xlp) > 0$
    and $h_{ave}(x_0) \leq 0$.
    If $h_{ave}(x_0) = 0$, then, by definition, $\hat h(x_0) \leq h_{ave}(x_0) =
    0$ and we are done.
    We assume, therefore, that $h_{ave}(x_0) < 0$.

    Consider $g(\lambda) = h_{ave}(\xlp + \lambda(x_0 - \xlp))$ and let
    $\lambda_1 \in (0,1)$ be such that $g(\lambda_1) = 0$.
    The existence of $\lambda_1$ is justified by the continuity of $g$, $g(0) >
    0$ and $g(1) < 0$.
    Equivalently, $x_1 = \xlp + \lambda_1(x_0 - \xlp)$ is the intersection point
    between the segment joining $x_0$ with $\xlp$ and $\{x : h_{ave}(x) = 0\}$.
    The linearization of $g$ at $\lambda_1$ evaluated at $\lambda = 1$ is
    negative, because $g$ is concave, and equals $h_{ave}(x_1) + \nabla
    h_{ave}(x_1)^T (x_0 - x_1)$.
    Finally, given that $x_1 \in [l,u]$ and $h_{ave}(x_1) = 0$, $x_1$ is
    feasible for \eqref{eq:strong} and we conclude that $\hat h(x_0) < 0$.
\end{proof}

\begin{figure}
    \centering
    \includegraphics[width=\textwidth]{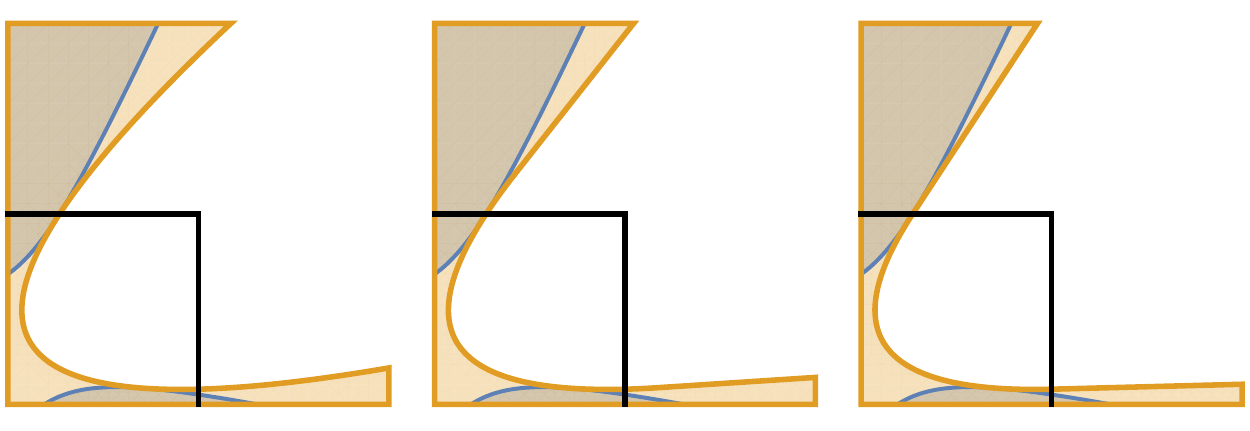}
    \caption{Feasible region $\{x,y \in [0,2] : h(x,y) \leq 0\}$, where
        $h = x^2 - 2y^2 + 4xy -3x+2y+1$, in blue together with
        $h_{ave}(x,y) \leq 0$ at $\xlp = (1,1)$ (left), Tuy's strengthening
        (middle) and $\hat h \leq 0$ (right) in orange. Region shown is
        $[0,4]^2$, $[0,2]^2$ is bounded by black lines.
    } \label{fig:ex2}
\end{figure}

%
%


\section{``Monoidal'' strengthening}
\label{sec:monoid}
We show how to strengthen cuts from reverse convex constraints when exactly
\emph{one} non-basic variable is integer.
Our technique is based on monoidal strengthening applied to disjunctive cuts,
see \Cref{lemma:strength} and the discussion following it.
If more than one variable is integer, we can generate one cut per integer
variable, relaxing the integrality of all but one variable at a time.
However, under some conditions (see \Cref{rmk:more_integers}), we can exploit
the integrality of several variables at the same time.
Our exposition of the monoidal strengthening technique is slightly different
from~\cite{BalasJeroslow1980} and is inspired by~\cite[Section
4.2.3]{Wiese2016}.

Throughout this section, we assume that we already have a concave
underestimator, and that we have performed the change of variables described in
the introduction.
Therefore, we consider the constraint $\{x \in [0, u] : h(x) \leq 0\}$ where
$h : \mathbb{R}^n \to \mathbb{R}$ is concave and $h(0) > 0$.
Let $Y = \{y \in [0, u]: h(y) = 0\}$.
%
The convex $S$-free set $C = \{ x \in [0,u] : h(x) \geq 0\}$ can be written as
\begin{align*}
    C
    &= \bigcap_{y \in Y} \{ x \in [0,u] : \nabla h(y)^\T x \geq \nabla h(y)^\T y
    \}.
\end{align*}
The concavity of $h$ implies that $h(0) \leq h(y) - \nabla h(y)^\T y$ for all $y$
in the domain of $h$.
In particular, if $y \in Y$, then $\nabla h(y)^\T y \leq -h(0) < 0$.
Since all feasible points satisfy $h(x) \leq 0$, they must satisfy the
infinite disjunction
\begin{equation}
    \label{eq:inf_disj}
    \bigvee_{y \in Y} \frac{\nabla h(y)^\T}{\nabla h(y)^\T y} x \geq 1.
\end{equation}

The maximum principle~\cite{Balas1979} implies that with
\begin{equation}
    \label{eq:cut_coeff}
    \alpha_j = \max_{y \in Y} \frac{\partial_j h(y)}{\nabla h(y)^\T y},
\end{equation}
the cut $\sum_j \alpha_j x_j \geq 1$ is valid.
We remark that the maximum exists, since the concavity of $h$ implies that
for $y \in Y$, $h(e_j) \leq \partial_j h(y) - \nabla h(y)^\T y$.
This implies, together with $\nabla h(y)^\T y \leq -h(0) < 0$, that
$\tfrac{\partial_j h(y)}{\nabla h(y)^\T y} \leq 1 + \tfrac{h(e_j)}{\nabla
h(y)^\T y}$.
If $h(e_j) \geq 0$, then $\tfrac{\partial_j h(y)}{\nabla h(y)^\T y} \leq 1$.
Otherwise, $\tfrac{\partial_j h(y)}{\nabla h(y)^\T y} \leq 1 -
\tfrac{h(e_j)}{h(0)}$.

The application of monoidal strengthening~\cite[Theorem 3]{BalasJeroslow1980} to
a valid disjunction $\bigvee_i \alpha^i x \geq 1$ requires the existence of
bounds $\beta_i$ such that $\alpha^i x \geq \beta$ is valid for every feasible
point.
Let $\beta(y)$ be such a bound for~\eqref{eq:inf_disj}.
An example of $\beta(y)$ is
\[
    \beta(y) = \min_{x \in [0,u]} \frac{\nabla h(y)^\T x}{\nabla h(y)^\T y}.
\]

\begin{remark}
    \label{rmk:beta_bound}
    If $\beta(y) \geq 1$, then $\nabla h(y)^\T x / \nabla h(y)^\T y \geq 1$ is
    redundant and can be removed from~\eqref{eq:inf_disj}.
    Therefore, we can assume without loss of generality that $\beta(y) < 1$.
\end{remark}

The strengthening derives from the fact that a new disjunction can be obtained
from~\eqref{eq:inf_disj} and, with it, a new disjunctive cut.
The disjunction on the following Lemma is trivially satisfied, but provides the
basis for building non-trivial new disjunctions.
\begin{lemma}
    \label{lemma:strength}
    Every $x \geq 0$ that satisfies~\eqref{eq:inf_disj}, also satisfies
    \begin{equation}
        \label{eq:pre_strengthening}
        \bigvee_{y \in Y}
        \frac{\nabla h(y)^\T x}{\nabla h(y)^\T y} + z(y) (1 - \beta(y)) \geq 1,
    \end{equation}
    where $z \colon Y \to \mathbb{Z}$ is such that $z \equiv 0$ or there is a
    $y_0 \in Y$ for which $z(y_0) > 0$.
\end{lemma}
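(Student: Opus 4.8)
The plan is to argue directly from the definition. Fix $x \geq 0$ satisfying~\eqref{eq:inf_disj} and a function $z \colon Y \to \mathbb{Z}$ of the stated form, and show that at least one disjunct in~\eqref{eq:pre_strengthening} holds. I would split into the two cases allowed for $z$.

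First, if $z \equiv 0$, then~\eqref{eq:pre_strengthening} reduces exactly to~\eqref{eq:inf_disj}, which $x$ satisfies by hypothesis; so some disjunct holds with the added term equal to $0$. Second, suppose there exists $y_0 \in Y$ with $z(y_0) > 0$. Here the idea is that the term $z(y_0)(1 - \beta(y_0))$ is a nonnegative quantity added to the left-hand side of the $y_0$-disjunct, so it can only help. Concretely, by the definition of $\beta(y_0)$ we have $\nabla h(y_0)^\T x / \nabla h(y_0)^\T y_0 \geq \beta(y_0)$ for every feasible $x$ (in particular for our $x$, which is in $[0,u]$; note that to invoke this we need $x$ to lie in the box, which is part of the standing assumption $x \in [0,u]$ in this section). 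By \Cref{rmk:beta_bound} we may assume $\beta(y_0) < 1$, so $1 - \beta(y_0) > 0$, and since $z(y_0) \geq 1$ we get $z(y_0)(1 - \beta(y_0)) \geq 1 - \beta(y_0)$. Adding,
\[
    \frac{\nabla h(y_0)^\T x}{\nabla h(y_0)^\T y_0} + z(y_0)(1 - \beta(y_0))
    \;\geq\; \beta(y_0) + (1 - \beta(y_0)) \;=\; 1,
\]
so the $y_0$-disjunct of~\eqref{eq:pre_strengthening} is satisfied. This closes both cases.

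The only subtlety to be careful about is the direction of the inequality defining $\beta(y)$ relative to the sign of $\nabla h(y)^\T y$: since $\nabla h(y)^\T y \leq -h(0) < 0$, the ratio $\nabla h(y)^\T x / \nabla h(y)^\T y$ is the quantity being bounded below by $\beta(y)$ by construction (it is a minimum over $x \in [0,u]$), so no sign flip is hidden. Thus I do not expect a genuine obstacle here; the lemma is, as the text says, "trivially satisfied," and the content is entirely in setting up the right bookkeeping so that the disjunction~\eqref{eq:pre_strengthening} — with its free integer parameter $z$ — can later be combined with the maximum principle to produce strengthened cut coefficients. The main point to state clearly in the write-up is \emph{why} $\beta(y_0)$ is a valid lower bound for $x$ itself (feasibility of $x$ for the box $[0,u]$), since that is the one external fact the argument relies on.
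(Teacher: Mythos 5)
Your proof is correct and follows essentially the same route as the paper's: split on whether $z \equiv 0$, and otherwise use $\beta(y_0) < 1$ (from \Cref{rmk:beta_bound}) together with $z(y_0) \geq 1$ to show the $y_0$-disjunct holds unconditionally via $\beta(y_0) + z(y_0)(1-\beta(y_0)) \geq 1$. Your added remark that the bound $\beta(y_0)$ applies because $x$ lies in the box $[0,u]$ is a fair clarification of a point the paper glosses over, but it does not change the argument.
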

\begin{proof}
    If $z \equiv 0$, then~\eqref{eq:pre_strengthening} reduces
    to~\eqref{eq:inf_disj}.

    Otherwise, let $y_0 \in Y$ such that $z(y_0) > 0$, that is, $z(y_0) \geq 1$.
    By \Cref{rmk:beta_bound}, for every $y \in Y$, it holds $1 - \beta(y) > 0$,
    and so
    \[
        z(y_0) (1 - \beta(y_0)) \geq 1 - \beta(y_0).
    \]
    Therefore, $\beta(y_0) \geq 1 - z(y_0) (1 - \beta(y_0))$.
    Since every $x \geq 0$ satisfying~\eqref{eq:inf_disj} satisfies
    $\frac{\nabla h(y_0)^\T x}{\nabla h(y_0)^\T y_0} \geq \beta(y_0)$, we
    conclude that $\frac{\nabla h(y_0)^\T x}{\nabla h(y_0)^\T y_0} + z(y_0) (1 -
    \beta(y_0)) \geq 1$ holds.
\end{proof}

\begin{remark}
    \label{rmk:unbounded_beta}
    Even if some disjunctive terms have no lower bound, that is, $\beta(y) =
    -\infty$ for $y \in Y' \subseteq Y$, \Cref{lemma:strength} still holds
    if, additionally, $z(y) = 0$ for all $y \in Y'$.
    This means that we are not using that disjunction for the strengthening.
    In particular, if for some variable $x_j$, $\alpha_j$ is defined by some
    $y \in Y'$, then this cut coefficient cannot be improved.
\end{remark}

Assume now that $x_k \in \mathbb{Z}$ for every $k \in K \subseteq \{1, \ldots,
n\}$.
To construct a new disjunction, we need to find a set of functions $M$ such that
for any choice of $m^k \in M$ and any feasible assignment of $x_k$, $\sum_{k \in
K} x_k m^k(y)$ satisfies the conditions of \Cref{lemma:strength}, that is, it
must be in
\[
    Z = \{ z \colon Y \to \mathbb{Z} : z \equiv 0 \vee \exists y \in Y, z(y) > 0 \}.
\]
Once such a family of functions has been identified, the cut
$\sum_j \gamma_j x_j \geq 1$ with $\gamma_j = \alpha_j$ if $j \notin K$,
and
\begin{equation}
    \label{eq:strong_cut_coeff}
    \gamma_k = \inf_{m \in M} \max_{y \in Y}
    \frac{\partial_k h(y)}{\nabla h(y)^\T y} + m(y) (1 - \beta(y))
    \quad \text{for } k \in K,
\end{equation}
is valid and at least as strong as~\eqref{eq:cut_coeff}.
Any $M \subseteq Z$ such that $(M,+)$ is a monoid, that is, $0 \in M$ and $M$ is
closed under addition can be used in~\eqref{eq:strong_cut_coeff}.

\begin{remark}
    \label{rmk:relation_bj80}
    This is exactly what is happening in~\cite[Theorem 3]{BalasJeroslow1980}.
    Indeed, in the finite case, that is, when $Y$ is finite, Balas and Jeroslow
    considered $M = \{ m \in \mathbb{Z}^Y : \sum_{y \in Y} m_y \geq 0\}$.
    Clearly, $(M,+)$ is a monoid and $M \subseteq Z$.
    Therefore, \Cref{lemma:strength} implies that
    $\bigvee_{y \in Y} \alpha^y x + \sum_k m^k_y x_k (1 - \beta_y) \geq 1$
    is valid for any choice of $m^k \in M$, which in turn
    implies~\cite[Theorem 3]{BalasJeroslow1980}.\\
    For an application that uses a different monoid see~\cite{BalasQualizza2012}.
\end{remark}

The question that remains is how to choose $M$.
For example, the monoid $M = \{ m \colon Y \to \mathbb{Z} : m \text{ has finite
support and } \sum_{y \in Y} m(y) \geq 0 \}$ is an obvious candidate for $M$.
However, the problem is how to optimize over such an $M$, see~\eqref{eq:strong_cut_coeff}.

We circumvent this problem by considering only one integer variable at a time.
Fix $k \in K$.
In this setting we can use $Z$ as $M$, which is \emph{not} a monoid.
Indeed, if $z \in Z$, then $x_k z \in Z$ for any $x_k \in \mathbb{Z}_+$.
The advantage of using $Z$ is that the solution of~\eqref{eq:strong_cut_coeff}
is easy to characterize.

With $M = Z$, the cut coefficients~\eqref{eq:strong_cut_coeff} of all variables
are the same as~\eqref{eq:cut_coeff} except for $x_k$.
The cut coefficient of $x_k$ is given by
\[
    \inf_{z \in Z} \max_{y \in Y} \frac{\partial_k h(y)}{\nabla h(y)^\T y} +
    z(y) (1 - \beta(y)).
\]

To compute this coefficient, observe that one would like to have $z(y) < 0$ for
points $y$ such that the objective function of~\eqref{eq:cut_coeff} is large.
However, $z$ must be positive for at least one point.
Therefore,
\begin{equation*}
    \min_{y \in Y} \frac{\partial_k h(y)}{\nabla h(y)^\T y} + (1 - \beta(y))
\end{equation*}
is the best coefficient we can hope for if $z \not\equiv 0$.
This coefficient can be achieved by
\begin{equation}
    \label{eq:optimal_z}
    z(y) = \begin{cases}
        1, &\text{ if } y \in \argmin_{y \in Y} \frac{\partial_k h(y)}{\nabla
        h(y)^\T y} + (1 - \beta(y)), \\
        -L, &\text{ otherwise }
    \end{cases}
\end{equation}
where $L > 0$ is sufficiently large.

Summarizing, we can obtain the following cut:
\begin{equation}
    \label{eq:final_coef}
    \alpha_j =
    \begin{cases}
        \max_{y \in Y} \frac{\partial_j h(y)}{\nabla h(y)^\T y}
        &\text{ if } j \neq k \\
        \min\{
            \max_{y \in Y} \frac{\partial_j h(y)}{\nabla h(y)^\T y},
            \min_{y \in Y} \frac{\partial_j h(y)}{\nabla h(y)^\T y} + (1 - \beta(y))
        \}
        &\text{ if } j = k
    \end{cases}
\end{equation}

\begin{remark}
    \label{rmk:more_integers}
    Let $z^k \in Z$ be given by~\eqref{eq:optimal_z} for each $k \in K$.
    Assume there is a subset $K_0 \subseteq K$ and a monoid $M \subseteq Z$
    such that $z^k \in M$ for every $k \in K_0$.
    Then, the strengthening can be applied to all $x_k$ for $k \in K_0$.

    Alternatively, if there is a constraint enforcing that at most one of the
    $x_k$ can be non-zero for $k \in K_0$, e.g., $\sum_{k \in K} x_k \leq 1$,
    then the strengthening can be applied to all $x_k$ for $k \in K_0$.
\end{remark}

\begin{example}
    \label{ex:monoidal}
    Consider the constraint $\{ x \in \{0,1,2\} \times [0,5] : h(x) \leq 0\}$,
    where $h(x_1, x_2) = -10x_1^2 - 1/2 x_2^2 + 2x_1x_2 + 4$, see \Cref{fig:ex3}.
    The IC is given by $\sqrt{5/2} x_1 + 1/(2 \sqrt{2}) x_2 \geq 1$.
    Note that $(1/\sqrt{10}, \sqrt{10}) \in Y$ and yields the term $1/\sqrt{10}
    x_2 \geq 1$ in~\eqref{eq:inf_disj}.
    Since $x_2 \geq 0$, $\beta (1/\sqrt{10}, \sqrt{10}) = 0$.
    Hence,~\eqref{eq:final_coef} yields $\alpha_1 \leq \min\{\sqrt{5/2}, 1\} =
    1$ and the strengthened inequality is $x_1 + 1/(2 \sqrt{2}) x_2 \geq 1$.
    \begin{figure}
        \includegraphics[width=\textwidth]{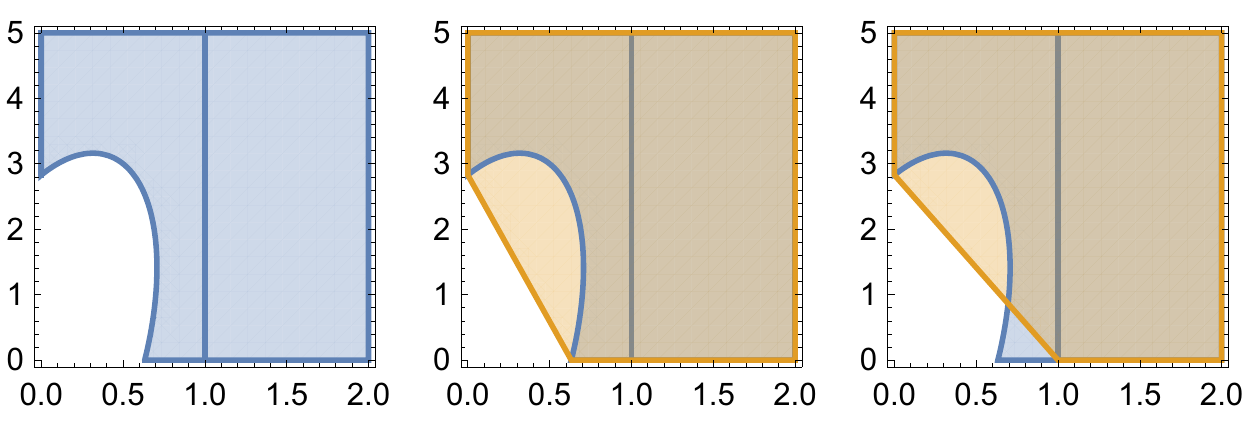}
        \caption{The feasible region $\{ x \in \{0,1,2\} \times [0,5] : h(x)
            \leq 0\}$ from \Cref{ex:monoidal} (left), the IC (middle), and the
            strengthened cut (right).
        } \label{fig:ex3}
    \end{figure}
\end{example}


\section{Conclusions}
\label{sec:conclusions}
We have introduced a procedure to generate concave underestimators of factorable
functions, which can be used to generate intersection cuts, together with
two strengthening procedures.

It remains to be seen the practical performance of these intersection cuts.
We expect that its generation is cheaper than the generation of disjunctive
cuts, given that there is no need to solve an LP.
As for the strengthening procedures, they might be too expensive to be of
practical use.
An alternative is to construct a polyhedral inner approximation of the $S$-free
set and use monoidal strengthening in the finite setting.
However, in this case, the strengthening proposed in \Cref{sec:strength}
has no effect.
Nonetheless, as far as the author knows, this has been the first application of
monoidal strengthening that is able to exploit further problem structure such as
demonstrated in \Cref{rmk:more_integers} and it might be interesting to
investigate further.

\paragraph{Acknowledgments} 
The author is indebted to Stefan Vigerske and Franziska Schl\"osser for many
helpful discussions and comments that improved the manuscript.
He would also like to thank Sven Wiese, Ambros Gleixner, Dan Steffy and Juan
Pablo Vielma for helpful discussions, and Leon Eifler, Daniel Rehfeldt for
comments that improved the manuscript.

\bibliographystyle{abbrv}
\bibliography{intercuts}

\begin{thebibliography}{10}

\bibitem{AchterbergWunderling2013}
T.~Achterberg and R.~Wunderling.
\newblock Mixed integer programming: Analyzing 12 years of progress.
\newblock In {\em Facets of Combinatorial Optimization}, pages 449--481.
  Springer Berlin Heidelberg, 2013.

\bibitem{Balas1971}
E.~Balas.
\newblock Intersection cuts{\textemdash}a new type of cutting planes for
  integer programming.
\newblock {\em Operations Research}, 19(1):19--39, feb 1971.

\bibitem{Balas1979}
E.~Balas.
\newblock Disjunctive programming.
\newblock In {\em Discrete Optimization {II}, Proceedings of the Advanced
  Research Institute on Discrete Optimization and Systems Applications of the
  Systems Science Panel of {NATO} and of the Discrete Optimization Symposium
  co-sponsored by {IBM} Canada and {SIAM} Banff, Aha. and Vancouver}, pages
  3--51. Elsevier {BV}, 1979.

\bibitem{BalasCeriaCornuejols1993}
E.~Balas, S.~Ceria, and G.~Cornu{\'{e}}jols.
\newblock A lift-and-project cutting plane algorithm for mixed 0{\textendash}1
  programs.
\newblock {\em Mathematical Programming}, 58(1-3):295--324, jan 1993.

\bibitem{BalasJeroslow1980}
E.~Balas and R.~G. Jeroslow.
\newblock Strengthening cuts for mixed integer programs.
\newblock {\em European Journal of Operational Research}, 4(4):224--234, apr
  1980.

\bibitem{BalasMargot2011}
E.~Balas and F.~Margot.
\newblock Generalized intersection cuts and a new cut generating paradigm.
\newblock {\em Mathematical Programming}, 137(1-2):19--35, aug 2011.

\bibitem{BalasQualizza2012}
E.~Balas and A.~Qualizza.
\newblock Monoidal cut strengthening revisited.
\newblock {\em Discrete Optimization}, 9(1):40--49, feb 2012.

\bibitem{BasuCornuejolsZambelli2011}
A.~Basu, G.~Cornu{\'e}jols, and G.~Zambelli.
\newblock Convex sets and minimal sublinear functions.
\newblock {\em Journal of Convex Analysis}, 18(2):427--432, 2011.

\bibitem{Belotti2011}
P.~Belotti.
\newblock Disjunctive cuts for nonconvex {MINLP}.
\newblock In {\em Mixed Integer Nonlinear Programming}, pages 117--144.
  Springer New York, nov 2011.

\bibitem{BelottiLeeLibertiMargotWaechter2009}
P.~Belotti, J.~Lee, L.~Liberti, F.~Margot, and A.~Wächter.
\newblock Branching and bounds tightening techniques for non-convex {MINLP}.
\newblock {\em Optimization Methods and Software}, 24(4-5):597--634, oct 2009.

\bibitem{BienstockChenMunoz2016}
D.~Bienstock, C.~Chen, and G.~Mu{\~{n}}oz.
\newblock Outer-product-free sets for polynomial optimization and oracle-based
  cuts.

\bibitem{BonamiLinderothLodi2011}
P.~Bonami, J.~Linderoth, and A.~Lodi.
\newblock Disjunctive cuts for mixed integer nonlinear programming problems.
\newblock {\em Progress in Combinatorial Optimization}, pages 521--541, 2011.

\bibitem{BoukouvalaMisenerFloudas2016}
F.~Boukouvala, R.~Misener, and C.~A. Floudas.
\newblock Global optimization advances in mixed-integer nonlinear programming,
  {MINLP}, and constrained derivative-free optimization, {CDFO}.
\newblock {\em European Journal of Operational Research}, 252(3):701--727, aug
  2016.

\bibitem{BrondstedRockafellar1965}
A.~Brondsted and R.~T. Rockafellar.
\newblock On the subdifferentiability of convex functions.
\newblock {\em Proceedings of the American Mathematical Society}, 16(4):605,
  aug 1965.

\bibitem{BuchheimDAmbrosio2016}
C.~Buchheim and C.~D'Ambrosio.
\newblock Monomial-wise optimal separable underestimators for mixed-integer
  polynomial optimization.
\newblock {\em Journal of Global Optimization}, 67(4):759--786, may 2016.

\bibitem{BuchheimTraversi2013}
C.~Buchheim and E.~Traversi.
\newblock Separable non-convex underestimators for binary quadratic
  programming.
\newblock In {\em Experimental Algorithms}, pages 236--247. Springer Berlin
  Heidelberg, 2013.

\bibitem{ConfortiCornuejolsDaniilidisLemarechalMalick2015}
M.~Conforti, G.~Cornu{\'{e}}jols, A.~Daniilidis, C.~Lemar{\'{e}}chal, and
  J.~Malick.
\newblock Cut-generating functions and {S}-free sets.
\newblock {\em Mathematics of Operations Research}, 40(2):276--391, may 2015.

\bibitem{ConfortiCornuejolsZambelli2011a}
M.~Conforti, G.~Cornu{\'{e}}jols, and G.~Zambelli.
\newblock Corner polyhedron and intersection cuts.
\newblock {\em Surveys in Operations Research and Management Science},
  16(2):105--120, jul 2011.

\bibitem{ConfortiCornuejolsZambelli2011}
M.~Conforti, G.~Cornu{\'{e}}jols, and G.~Zambelli.
\newblock A geometric perspective on lifting.
\newblock {\em Operations Research}, 59(3):569--577, jun 2011.

\bibitem{DeyWolsey2010}
S.~S. Dey and L.~A. Wolsey.
\newblock Two row mixed-integer cuts via lifting.
\newblock {\em Mathematical Programming}, 124(1-2):143--174, may 2010.

\bibitem{Glover1973}
F.~Glover.
\newblock Convexity cuts and cut search.
\newblock {\em Operations Research}, 21(1):123--134, feb 1973.

\bibitem{Glover1974}
F.~Glover.
\newblock Polyhedral convexity cuts and negative edge extensions.
\newblock {\em Zeitschrift für Operations Research}, 18(5):181--186, oct 1974.

\bibitem{Hasan2018}
M.~M.~F. Hasan.
\newblock An edge-concave underestimator for the global optimization of
  twice-differentiable nonconvex problems.
\newblock {\em Journal of Global Optimization}, 71(4):735--752, mar 2018.

\bibitem{Khamisov1999}
O.~Khamisov.
\newblock On optimization properties of functions, with a concave minorant.
\newblock {\em Journal of Global Optimization}, 14(1):79--101, 1999.

\bibitem{KilincSahinidis2017}
M.~R. K{\i}l{\i}n{\c{c}} and N.~V. Sahinidis.
\newblock Exploiting integrality in the global optimization of mixed-integer
  nonlinear programming problems with {BARON}.
\newblock {\em Optimization Methods and Software}, 33(3):540--562, jul 2017.

\bibitem{LinSchrage2009}
Y.~Lin and L.~Schrage.
\newblock The global solver in the {LINDO} {API}.
\newblock {\em Optimization Methods and Software}, 24(4-5):657--668, oct 2009.

\bibitem{McCormick1976}
G.~P. McCormick.
\newblock Computability of global solutions to factorable nonconvex programs:
  Part i {\textemdash} convex underestimating problems.
\newblock {\em Mathematical Programming}, 10(1):147--175, dec 1976.

\bibitem{MisenerFloudas2014}
R.~Misener and C.~A. Floudas.
\newblock {ANTIGONE}: Algorithms for {coNTinuous} / {Integer Global
  Optimization of Nonlinear Equations}.
\newblock {\em Journal of Global Optimization}, 59(2-3):503--526, mar 2014.

\bibitem{ModaresiKilincVielma2015}
S.~Modaresi, M.~R. K{\i}l{\i}n{\c{c}}, and J.~P. Vielma.
\newblock Intersection cuts for nonlinear integer programming: convexification
  techniques for structured sets.
\newblock {\em Mathematical Programming}, 155(1-2):575--611, feb 2015.

\bibitem{Porembski1999}
M.~Porembski.
\newblock How to extend the concept of convexity cuts to derive deeper cutting
  planes.
\newblock {\em Journal of Global Optimization}, 15(4):371--404, 1999.

\bibitem{Porembski2001}
M.~Porembski.
\newblock Finitely convergent cutting planes for concave minimization.
\newblock {\em Journal of Global Optimization}, 20(2):109--132, 2001.

\bibitem{SaxenaBonamiLee2010}
A.~Saxena, P.~Bonami, and J.~Lee.
\newblock Convex relaxations of non-convex mixed integer quadratically
  constrained programs: extended formulations.
\newblock {\em Mathematical Programming}, 124(1-2):383--411, may 2010.

\bibitem{SaxenaBonamiLee2010a}
A.~Saxena, P.~Bonami, and J.~Lee.
\newblock Convex relaxations of non-convex mixed integer quadratically
  constrained programs: projected formulations.
\newblock {\em Mathematical Programming}, 130(2):359--413, mar 2010.

\bibitem{SenSherali1986}
S.~Sen and H.~D. Sherali.
\newblock Facet inequalities from simple disjunctions in cutting plane theory.
\newblock {\em Mathematical Programming}, 34(1):72--83, jan 1986.

\bibitem{SenSherali1987}
S.~Sen and H.~D. Sherali.
\newblock Nondifferentiable reverse convex programs and facetial convexity cuts
  via a disjunctive characterization.
\newblock {\em Mathematical Programming}, 37(2):169--183, jun 1987.

\bibitem{TawarmalaniSahinidis2002}
M.~Tawarmalani and N.~V. Sahinidis.
\newblock Convex extensions and envelopes of lower semi-continuous functions.
\newblock {\em Mathematical Programming}, 93(2):247--263, dec 2002.

\bibitem{TawarmalaniSahinidis2005}
M.~Tawarmalani and N.~V. Sahinidis.
\newblock A polyhedral branch-and-cut approach to global optimization.
\newblock {\em Mathematical Programming}, 103(2):225--249, may 2005.

\bibitem{TsoukalasMitsos2014}
A.~Tsoukalas and A.~Mitsos.
\newblock Multivariate {McCormick} relaxations.
\newblock {\em Journal of Global Optimization}, 59(2-3):633--662, apr 2014.

\bibitem{Tuy1964}
H.~Tuy.
\newblock Concave programming with linear constraints.
\newblock In {\em Doklady Akademii Nauk}, volume 159, pages 32--35. Russian
  Academy of Sciences, 1964.

\bibitem{VigerskeGleixner2017}
S.~Vigerske and A.~Gleixner.
\newblock {SCIP}: global optimization of mixed-integer nonlinear programs in a
  branch-and-cut framework.
\newblock {\em Optimization Methods and Software}, 33(3):563--593, jun 2017.

\bibitem{Wiese2016}
S.~Wiese.
\newblock {\em On the interplay of Mixed Integer Linear, Mixed Integer
  Nonlinear and Constraint Programming}.
\newblock PhD thesis, 2016.

\end{thebibliography}

\end{document}